\documentclass[12pt]{article}
\usepackage[utf8]{inputenc}
\usepackage[T1]{fontenc}
\usepackage{amsmath}
\usepackage{amssymb}
\usepackage{amsthm}
\usepackage{geometry}
\usepackage{hyperref}
\usepackage{cite}

\theoremstyle{plain}
\newtheorem{theorem}{Theorem}[section]
\newtheorem{lemma}[theorem]{Lemma}

\begin{document}

\title{\textbf{On the Asymptotic Behavior of a Multiplicative Arithmetic Function Related to the Divisor Function Over Perfect Squares Integers Generated by Shifting}}
\author{Mihoub Bouderbala\textsuperscript{(1)}\\
\small\textsuperscript{(1)} Faculty of Matter Sciences and Computer Sciences, Department of Mathematics,\\
\small FIMA Laboratory, University of Djilali Bounaama, Khemis Miliana 44225, Algeria\\
\small mihoub75bouder@gmail.com}
\date{}
\maketitle

\begin{abstract}
\noindent Let $x$ be a real number satisfying $x \geq 2$. For any positive integer $n$, we define $s(n)$ as the smallest non-negative integer such that $n + s(n)$ is a perfect square. In this paper, we derive an asymptotic formula for the sum
\begin{equation*}
\sum_{n \leq x} D(n + s(n)),
\end{equation*}
where
\begin{equation*}
D(n) = \frac{\tau(n)}{2^{\omega(n)}}.
\end{equation*}
Here, $\tau(n)$ denotes the number of positive divisors of $n$, and $\omega(n)$ stands for the number of distinct prime factors of $n$.
\end{abstract}

\textbf{Keywords:} Asymptotic behavior, perfect square, shift function, divisor function.

\textbf{Mathematics Subject Classification (2020):} 11A25, 11N37.

\section{Introduction}

Throughout this paper, we let $\mathbb{N}$ denote the set of all positive integers. We write $a \mid b$ to indicate that $a$ divides $b$, for all positive integers $a$ and $b$. The notation $d \parallel n$ means that $d \mid n$ and $\gcd(d, n/d) = 1$. In particular, $p^{\alpha} \parallel n$ means that $p^{\alpha} \mid n$ but $p^{\alpha+1} \nmid n$, where $\alpha$ and $n$ are positive integers and $p$ denotes a prime number.

We recall that the arithmetic function $\tau(n)$ is defined by
\begin{equation}
\tau(n) := \sum_{d \mid n} 1,
\end{equation}
which counts the number of positive divisors of $n$, and
\begin{equation}
\omega(n) := \sum_{\substack{p \mid n \\ p \text{ prime}}} 1,
\end{equation}
which counts the number of distinct prime divisors of $n$.

Since the investigation of new properties of significant arithmetic functions has attracted considerable attention in recent years (see, e.g., \cite{Bouderbala2023, Indlekofer2002, Stepanauskas1997}), we focus in this paper on a novel aspect of the multiplicative arithmetic function defined by
\begin{equation}
D(n) := \frac{\tau(n)}{2^{\omega(n)}}.
\end{equation}

It is important to note that for any positive integer $n$,
\begin{equation}
2^{\omega(n)} = \sum_{d \parallel n} 1,
\end{equation}
where the sum runs over all unitary divisors $d$ of $n$. Consequently, $D(n)$ is multiplicative (see, e.g., \cite{Bouderbala2023} for further details). Moreover, for every prime number $p$ and every positive integer $m$, we have
\begin{equation}
D(p^m) = \frac{m+1}{2}.
\end{equation}

Using identity (1), it follows that for any $n \in \mathbb{N}$ with prime factorization $n = \prod_{p^{\alpha} \parallel n} p^{\alpha}$, we have
\begin{equation}
D(n) = \prod_{p^{\alpha} \parallel n} \frac{\alpha+1}{2}.
\end{equation}

In addition, we define the square-completion shift function as follows: for any positive integer $n$, let $s(n)$ be the smallest non-negative integer such that $n + s(n)$ is a perfect square. In other words, for every $n \in \mathbb{N}$, there exists a unique smallest integer $m \in \mathbb{N}$ such that
\begin{equation}
n + s(n) = m^2.
\end{equation}

For further clarity, we list some initial values of the function $s(n)$:
\begin{equation*}
s(1) = 0, \quad s(2) = 2, \quad s(3) = 1, \quad s(4) = 0, \quad s(5) = 4, \quad s(6) = 3, \quad s(7) = 2, \quad s(8) = 1, \ldots
\end{equation*}

In this paper, we employ analytic methods to investigate the asymptotic behavior of the arithmetic function $D(n)$ evaluated at these shifted square-completed integers. More precisely, we consider the summatory function
\begin{equation*}
\sum_{n \leq x} D(n + s(n)),
\end{equation*}
where $x \geq 2$ is a real number, and we establish a sharp asymptotic formula for this sum.

\section{Main Result}

We will prove the following theorem.

\begin{theorem}
Let $x \geq 2$ be a real number. Then we have the asymptotic formula
\begin{equation}
\sum_{n \leq x} D(n + s(n)) = \frac{C_1}{2} x \log x + \left(\left(2\gamma - \frac{1}{2}\right)C_1 + C_2\right) x + O(x^{3/4+\varepsilon}),
\end{equation}
where $\varepsilon > 0$ is an arbitrary fixed constant, $\gamma$ denotes the Euler--Mascheroni constant, and the constants $C_1$ and $C_2$ are defined by
\begin{equation}
C_1 := \prod_{p} \left(1 - \frac{1}{2p} + \frac{1}{2p^2}\right), \quad C_2 := P'(1),
\end{equation}
with $P(s) = \prod_{p} \left(1 - \frac{1}{2p^s} + \frac{1}{2p^{2s}}\right)$.
\end{theorem}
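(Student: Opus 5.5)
The plan is to reduce the sum to a weighted sum of $D(m^2)$ over $m \le \sqrt{x}$, identify the Dirichlet series of $D(m^2)$ as $\zeta(s)^2 P(s)$, and finish by partial summation.

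\textbf{Step 1: collapse the shift.} The first step is to make the shift explicit. For $n \in \mathbb{N}$, the integer $m$ in (7) is $m=\lceil \sqrt{n}\,\rceil$. Hence $n+s(n)=m^2$ exactly for the $2m-1$ integers $n$ with $(m-1)^2<n\le m^2$. Put $y=\sqrt{x}$. Grouping the $n\le x$ by $m$ gives
\begin{equation*}
\sum_{n\le x} D(n+s(n)) = \sum_{m\le y} (2m-1) D(m^2) + O\bigl(y \max_{m\le y+1} D(m^2)\bigr).
\end{equation*}
The error comes from the incomplete last block. Since $D(m^2)\le \tau(m^2)\ll m^{\varepsilon}$, it is $O(x^{1/2+\varepsilon})$. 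So everything reduces to the weighted sum of $a_m:=D(m^2)=\prod_{p^\alpha\parallel m}\frac{2\alpha+1}{2}$, which is multiplicative by (6).

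\textbf{Step 2: the Dirichlet series of $a_m$.} I would compute its Euler product. With $X=p^{-s}$ and $\sum_{\alpha\ge0}(2\alpha+1)X^\alpha=(1+X)/(1-X)^2$, the local factor is
\begin{equation*}
1+\sum_{\alpha\ge1}\frac{2\alpha+1}{2}X^\alpha=\frac12\Bigl(1+\frac{1+X}{(1-X)^2}\Bigr)=\frac{1-\frac{X}{2}+\frac{X^2}{2}}{(1-X)^2}.
\end{equation*}
Therefore $\sum_m a_m m^{-s}=\zeta(s)^2P(s)$, with $P$ as in the statement. Write $P(s)=\sum_k b_k k^{-s}$; the product for $P$ converges absolutely for $\operatorname{Re}s>1/2$. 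This gives $a=\tau * b$ with $\sum_k |b_k| k^{-\sigma}<\infty$ for every $\sigma>1/2$.

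\textbf{Step 3: the unweighted sum.} Using Dirichlet's divisor formula $\sum_{j\le z}\tau(j)=z\log z+(2\gamma-1)z+O(z^{1/2})$, I would write
\begin{equation*}
S(y):=\sum_{m\le y}a_m=\sum_{k\le y}b_k\sum_{j\le y/k}\tau(j).
\end{equation*}
Completing the $k$-sums to infinity then yields
\begin{equation*}
S(y)=C_1y\log y+\bigl((2\gamma-1)C_1+C_2\bigr)y+O(y^{1/2+\varepsilon}).
\end{equation*}
Here $C_1=P(1)$ and $C_2=P'(1)$ arise from $\sum_k b_k/k$ and $-\sum_k b_k\log k/k$. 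Note that $-\log k$ is exactly what $\log(y/k)$ contributes, and $\sum_k b_k(-\log k)k^{-1}=P'(1)$. The tails $\sum_{k>y}|b_k|/k$ and $\sum_{k>y}|b_k|\log k/k$ are $O(y^{-1/2+\varepsilon})$, and $\sum_{k\le y}|b_k|(y/k)^{1/2}=O(y^{1/2+\varepsilon})$. This step is the main technical point. The key is choosing $\sigma=1/2+\varepsilon$ in the absolute convergence to control all tails uniformly. Getting the constant $C_2$ with the correct sign also needs care.

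\textbf{Step 4: partial summation.} Finally I would evaluate
\begin{equation*}
\sum_{m\le y}(2m-1)a_m=\int_{1^-}^{y}2t\,dS(t)-S(y)
\end{equation*}
by partial summation. The term $S(y)=O(y\log y)$ is negligible. Substituting the main term of $S$, whose derivative is $C_1\log t+(2\gamma C_1+C_2)$, and using $\int_1^y 2t\log t\,dt=y^2\log y-y^2/2+O(1)$, gives
\begin{equation*}
\int_{1}^{y} 2t\,dS(t)=C_1y^2\log y+\bigl((2\gamma-\tfrac12)C_1+C_2\bigr)y^2+O(y^{3/2+\varepsilon}).
\end{equation*}
The remainder is $O(y^{3/2+\varepsilon})$: bounding the contribution of $E(t)=O(t^{1/2+\varepsilon})$ by integrating by parts gives $2yE(y)+\int_1^y 2|E(t)|\,dt\ll y^{3/2+\varepsilon}$. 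Since $y^2=x$ and $y^2\log y=\frac12x\log x$, this gives the main terms of (8) with error $O(x^{3/4+\varepsilon})$, which dominates the $O(x^{1/2+\varepsilon})$ from Step 1.
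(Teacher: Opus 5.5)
Your outline follows the paper's own route: collapse the shift to $\sum_{m\le\sqrt{x}}(2m-1)D(m^2)$, factor the Dirichlet series as $\zeta^2(s)P(s)$, get an asymptotic for $S(y)$, then apply partial summation. Step 1, the Euler-factor computation and Step 4 are fine; your blocks $(m-1)^2<n\le m^2$ are in fact more accurate than the paper's. The genuine gap is the claim in Step 2 that the product for $P$ converges absolutely for $\Re s>1/2$, on which all of Step 3 rests. It is false. Since $b=a*\mu*\mu$, you get $b_p=D(p^2)-\tau(p)=\frac32-2=-\frac12$ for every prime $p$; indeed $|b_k|=2^{-\omega(k)}$ on cube-free $k$. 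Hence $\sum_k|b_k|k^{-\sigma}\ge\frac12\sum_p p^{-\sigma}=\infty$ for every $\sigma\le1$, and the product converges absolutely only for $\Re s>1$. Consequently $\sum_{k>y}|b_k|/k$ and $\sum_{k>y}|b_k|\log k/k$ diverge, and $\sum_{k\le y}|b_k|(y/k)^{1/2}$ is of size $y(\log y)^{-1/2}$ rather than $O(y^{1/2+\varepsilon})$. The constants themselves also collapse: since $\sum_p\frac{1}{2p}=\infty$, the product $C_1=\prod_p\bigl(1-\frac{1}{2p}+\frac{1}{2p^2}\bigr)$ diverges to $0$, and $P'(1)$ does not exist.

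This cannot be patched, because the statement is false. Put $G(s)=\zeta(s)^{1/2}P(s)=\prod_p(1-p^{-s})^{-1/2}\bigl(1-\frac12p^{-s}+\frac12p^{-2s}\bigr)$. Its local factors are $1+\frac58p^{-2s}+O(p^{-3\sigma})$, so $G$ converges absolutely for $\Re s>1/2$, $G(1)>0$, and $\sum_m D(m^2)m^{-s}=\zeta(s)^{3/2}G(s)$. Thus $s=1$ is a branch point coming from $\zeta(s)^{3/2}$, not a double pole. The Selberg--Delange method gives $S(y)=\frac{G(1)}{\Gamma(3/2)}\,y(\log y)^{1/2}\bigl(1+O(1/\log y)\bigr)$, as one expects from $D(m^2)=(3/2)^{\omega(m)}$ for squarefree $m$. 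Feeding this into your Steps 1 and 4 gives $\sum_{n\le x}D(n+s(n))\sim\sqrt{2/\pi}\,G(1)\,x\sqrt{\log x}$, which is incompatible with any formula $Ax\log x+Bx+O(x^{3/4+\varepsilon})$. The paper's proof of Lemma 2.2 makes exactly the same error: it asserts absolute convergence of $P$ in $\Re s>1/2$ and treats $P$ as holomorphic and nonzero at $s=1$, so that $\zeta^2P$ has a genuine double pole. Following the paper would therefore not rescue Step 3. What is provable is the $x\sqrt{\log x}$ asymptotic above, refined by a Selberg--Delange expansion in powers of $1/\log x$.
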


\subsection{Proof of Theorem 2.1}

To establish the main result, we first require the following auxiliary lemma, which provides the asymptotic behavior of the sum involving $D(n^2)$.

\begin{lemma}
For any real number $x \geq 2$, we have
\begin{equation}
\sum_{n \leq x} D(n^2) = C_1 x \log x + \left((2\gamma - 1)C_1 + C_2\right) x + O(x^{1/2+\varepsilon}),
\end{equation}
where $C_1$, $C_2$, $\gamma$, and $\varepsilon > 0$ are as in Theorem 2.1.
\end{lemma}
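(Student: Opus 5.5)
The plan is to write $D(n^2)$ as a Dirichlet convolution of $\tau$ with a multiplicative function $g$ whose Dirichlet series converges absolutely for $\Re s>1/2$. Then we sum using the classical divisor problem.

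\textbf{Step 1: the Euler factor.} Since $D(p^{2a})=(2a+1)/2$, put $y=p^{-s}$. Using $\sum_{a\ge0}(2a+1)y^a=(1+y)/(1-y)^2$, the local factor of $F(s)=\sum_{n\ge1}D(n^2)n^{-s}$ is
\begin{equation*}
1+\frac12\left(\frac{1+y}{(1-y)^2}-1\right)=\frac{1-\frac{y}{2}+\frac{y^2}{2}}{(1-y)^2}.
\end{equation*}
Hence $F(s)=\zeta(s)^2P(s)$ with $P$ as in Theorem 2.1. Write $P(s)=\sum_{m\ge1}g(m)m^{-s}$. Then $g$ is multiplicative and supported on cube-free $m$, with $g(p)=-1/2$, $g(p^2)=1/2$ and $g(p^k)=0$ for $k\ge3$. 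In particular $|g(m)|\le1$ and $\sum_m|g(m)|m^{-\sigma}<\infty$ for every $\sigma>1/2$. Consequently
\begin{equation*}
D(n^2)=\sum_{dm=n}\tau(d)g(m).
\end{equation*}

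\textbf{Step 2: summation.} Let $T(y)=\sum_{d\le y}\tau(d)$. Dirichlet's hyperbola method gives $T(y)=y\log y+(2\gamma-1)y+O(y^{1/2})$ for $y\ge1$. Therefore
\begin{equation*}
\sum_{n\le x}D(n^2)=\sum_{m\le x}g(m)\Big(\tfrac{x}{m}\log\tfrac{x}{m}+(2\gamma-1)\tfrac{x}{m}+O\big(\tfrac{x^{1/2}}{m^{1/2}}\big)\Big).
\end{equation*}
We then complete the sums over $m$ to infinity, using
\begin{equation*}
\sum_{m\ge1}\frac{g(m)}{m}=P(1)=C_1,\qquad -\sum_{m\ge1}\frac{g(m)\log m}{m}=P'(1)=C_2.
\end{equation*}
The main terms then become $C_1x\log x+((2\gamma-1)C_1+C_2)x$, as claimed.

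\textbf{Step 3: error terms (main obstacle).} Two estimates are needed, and both use Rankin's trick with the absolute convergence of $\sum_m|g(m)|m^{-\sigma}$ for $\sigma>1/2$.
\begin{itemize}
\item \emph{The $O$-term.} We need $x^{1/2}\sum_{m\le x}|g(m)|m^{-1/2}\ll x^{1/2+\varepsilon}$. This follows from $m^{-1/2}\le x^{\varepsilon}m^{-1/2-\varepsilon}$ for $m\le x$.
\item \emph{The tails.} We need $x\sum_{m>x}|g(m)|m^{-1}(\log x+\log m)\ll x^{1/2+\varepsilon}$. This follows from $m^{-1}\le x^{-1/2+\varepsilon}m^{-1/2-\varepsilon}$ for $m>x$, after absorbing the logarithms into $\varepsilon$.
\end{itemize}
Once the exponent $1/2$ is identified as the abscissa of absolute convergence of $P$, these steps are routine. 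The only care needed is to keep the $\varepsilon$-losses uniform.
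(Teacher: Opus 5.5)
\paragraph{There is a genuine gap, and it cannot be repaired.} Your Step 1 asserts that $\sum_m |g(m)|m^{-\sigma}<\infty$ for every $\sigma>1/2$, and this is false. Since $g(p)=-1/2$ for \emph{every} prime $p$,
\[
\sum_m |g(m)|m^{-\sigma}=\prod_p\bigl(1+\tfrac{1}{2}p^{-\sigma}+\tfrac{1}{2}p^{-2\sigma}\bigr)\ge\tfrac{1}{2}\sum_p p^{-\sigma},
\]
which diverges for every $\sigma\le 1$. The abscissa of absolute convergence of $P$ is therefore $1$, not $1/2$.

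The consequences are fatal:
\begin{itemize}
\item The product $C_1=\prod_p\bigl(1-\tfrac{1}{2p}+\tfrac{1}{2p^2}\bigr)$ diverges to $0$, because $\sum_p 1/(2p)=\infty$.
\item The series $\sum_m g(m)\log m/m$ diverges, so $C_2=P'(1)$ does not exist.
\item In Step 3 your tail sums are infinite, since $\sum_m|g(m)|/m=\infty$.
\item Since $|g(m)|=2^{-\omega(m)}$ on cube-free $m$, the accumulated error $x^{1/2}\sum_{m\le x}|g(m)|m^{-1/2}$ is of order $x(\log x)^{-1/2}$, not $x^{1/2+\varepsilon}$.
\end{itemize}
Structurally, $P(s)=\zeta(s)^{-1/2}H(s)$, where $H(s)=\prod_p(1-p^{-s})^{-1/2}\bigl(1-\tfrac{1}{2}p^{-s}+\tfrac{1}{2}p^{-2s}\bigr)$ converges absolutely for $\Re s>1/2$. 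Hence $F(s)=\zeta(s)^{3/2}H(s)$ has a branch point at $s=1$, not a double pole. The quickest sanity check is $D(p^2)=3/2$: coefficients equal to $3/2$ on the primes are the signature of $\zeta(s)^{3/2}$, not of $\zeta(s)^2$.

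\paragraph{The paper's proof has the same flaw.} The paper proceeds differently: Perron's formula with $T=x$, a residue computation at an alleged double pole at $s=1$, and a contour shift to $\Re s=1/2$. But it rests on exactly the same false premise, namely that $P$ is holomorphic and non-vanishing at $s=1$ and absolutely convergent for $\Re s>1/2$. So the lemma as stated is false; indeed it is not even well-posed, since $C_2$ is undefined. No bookkeeping of $\varepsilon$-losses can rescue either argument.

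What is true, by the Selberg--Delange method, is
\[
\sum_{n\le x}D(n^2)=\frac{H(1)}{\Gamma(3/2)}\,x(\log x)^{1/2}\Bigl(1+O\Bigl(\frac{1}{\log x}\Bigr)\Bigr).
\]
Your convolution strategy can be adapted to prove this. Factor out $\zeta(s)^{3/2}$ instead of $\zeta(s)^2$, so that the leftover factor $H$ really does converge absolutely for $\Re s>1/2$. Then use Selberg--Delange for the summatory function of the coefficients $\tau_{3/2}$ of $\zeta(s)^{3/2}$ in place of the Dirichlet divisor problem.
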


\begin{proof}
The arithmetic function is defined by
\begin{equation*}
D(n) := \frac{\tau(n)}{2^{\omega(n)}},
\end{equation*}
where $\tau(n)$ denotes the number of positive divisors of $n$ and $\omega(n)$ the number of its distinct prime divisors. We consider the Dirichlet series associated with $D(n^2)$, namely
\begin{equation*}
f(s) := \sum_{n=1}^{\infty} \frac{D(n^2)}{n^s}, \quad s = \sigma + it.
\end{equation*}

Since the function $n \mapsto D(n^2)$ is multiplicative, its Dirichlet series admits an Euler product (\cite[p. 320]{Apostol1976}):
\begin{equation*}
f(s) = \prod_{p} \left( \sum_{m=0}^{\infty} \frac{D(p^{2m})}{p^{ms}} \right).
\end{equation*}

Indeed, for $n = p^m$, we have $n^2 = p^{2m}$, so the term corresponding to $n = p^m$ in the Dirichlet series is $D(p^{2m})/p^{ms}$. Thus, the exponent in the denominator is $m$ (since $n = p^m$), while the argument of $D$ is $p^{2m}$.

For a fixed prime $p$, the local Euler factor is therefore
\begin{equation*}
f_p(s) := \sum_{m=0}^{\infty} \frac{D(p^{2m})}{p^{ms}}.
\end{equation*}

We now compute $D(p^{2m})$. Recall that
\begin{equation*}
\tau(p^{2m}) = 2m + 1, \quad \omega(p^{2m}) = \begin{cases} 0 & \text{if } m = 0, \\ 1 & \text{if } m \geq 1. \end{cases}
\end{equation*}

Hence,
\begin{equation*}
D(p^{2m}) = \frac{\tau(p^{2m})}{2^{\omega(p^{2m})}} = \begin{cases} 1 & \text{if } m = 0, \\ \displaystyle\frac{2m+1}{2} & \text{if } m \geq 1. \end{cases}
\end{equation*}

It follows that
\begin{equation*}
f_p(s) = 1 + \sum_{m=1}^{\infty} \frac{2m+1}{2 p^{ms}} = 1 + \frac{1}{2} \sum_{m=1}^{\infty} (2m+1) p^{-ms}.
\end{equation*}

Set $x = p^{-s}$ (so $|x| < 1$ for $\Re(s) > 0$). Using the classical identity
\begin{equation*}
\sum_{m=0}^{\infty} (2m+1) x^m = \frac{1+x}{(1-x)^2},
\end{equation*}
we obtain
\begin{equation*}
\sum_{m=1}^{\infty} (2m+1) x^m = \frac{1+x}{(1-x)^2} - 1 = \frac{3x - x^2}{(1-x)^2}.
\end{equation*}

Therefore,
\begin{equation*}
f_p(s) = 1 + \frac{1}{2} \cdot \frac{3x - x^2}{(1-x)^2} = \frac{2(1-x)^2 + 3x - x^2}{2(1-x)^2}.
\end{equation*}

By expanding $(1-x)^2$, the numerator becomes
\begin{equation*}
2(1-x)^2 + 3x - x^2 = 2 - x + x^2.
\end{equation*}

Hence,
\begin{equation*}
f_p(s) = \frac{2 - x + x^2}{2(1-x)^2} = \frac{2 - p^{-s} + p^{-2s}}{2(1 - p^{-s})^2}.
\end{equation*}

The full Dirichlet series is thus
\begin{equation*}
f(s) = \prod_{p} f_p(s) = \prod_{p} \frac{2 - p^{-s} + p^{-2s}}{2(1 - p^{-s})^2}.
\end{equation*}

Since $\zeta(s) = \prod_{p} (1 - p^{-s})^{-1}$, we have $\zeta^2(s) = \prod_{p} (1 - p^{-s})^{-2}$. Factoring this out yields
\begin{equation}
f(s) = \zeta^2(s) \prod_{p} \frac{2 - p^{-s} + p^{-2s}}{2} = \zeta^2(s) \prod_{p} \left(1 - \frac{1}{2p^s} + \frac{1}{2p^{2s}}\right), \quad \Re(s) > 1.
\end{equation}

Since $D(n^2) \leq \tau(n^2) \ll_{\varepsilon} n^{\varepsilon}$ for every $\varepsilon > 0$ (see, e.g., \cite[p. 69]{Montgomery2007}), the Dirichlet series $f(s)$ converges absolutely for $\Re(s) > 1$.

Then, from this identity and by applying Perron's formula (see, e.g., \cite[p. 147]{Tenenbaum1995}), we obtain the following expression, valid for all $x \geq 2$ and $T \geq 1$:
\begin{equation}
\sum_{n \leq x} D(n^2) = \frac{1}{2\pi i} \int_{3/2-iT}^{3/2+iT} f(s) \frac{x^s}{s} \, ds + O\left(\frac{x^{3/2+\varepsilon}}{T}\right),
\end{equation}
where $\varepsilon > 0$ is an arbitrary fixed constant.

To evaluate the main term, we shift the line of integration from $\Re(s) = 3/2$ to $\Re(s) = 1/2$. Set
\begin{equation*}
F(s) := f(s) \frac{x^s}{s} = \zeta^2(s) P(s) \frac{x^s}{s},
\end{equation*}
where
\begin{equation*}
P(s) := \prod_{p} \left(1 - \frac{1}{2p^s} + \frac{1}{2p^{2s}}\right).
\end{equation*}

The function $F(s)$ is meromorphic in the half-plane $\Re(s) > 1/2$ and possesses a double pole at $s = 1$, arising from $\zeta^2(s)$. By the residue theorem, we have
\begin{equation*}
\frac{1}{2\pi i} \left( \int_{3/2-iT}^{3/2+iT} + \int_{3/2+iT}^{1/2+iT} + \int_{1/2+iT}^{1/2-iT} + \int_{1/2-iT}^{3/2-iT} \right) f(s) \frac{x^s}{s} \, ds = \text{Res}(F(s), 1).
\end{equation*}

Using the classical Laurent expansion of the Riemann zeta function around $s = 1$,
\begin{equation*}
\zeta(s) = \frac{1}{s-1} + \gamma + O(s-1),
\end{equation*}
where $\gamma$ denotes the Euler--Mascheroni constant, we deduce
\begin{equation*}
\zeta^2(s) = \frac{1}{(s-1)^2} + \frac{2\gamma}{s-1} + O(1).
\end{equation*}

Since $P(s)$ is holomorphic and non-vanishing at $s = 1$, we write the local expansion
\begin{equation*}
P(s) = P(1) + P'(1)(s-1) + O((s-1)^2).
\end{equation*}

A standard computation of the residue of a double pole then yields
\begin{equation*}
\text{Res}(F(s), 1) = x \left( P(1)(\log x + 2\gamma - 1) + P'(1) \right).
\end{equation*}

Consequently,
\begin{equation}
\frac{1}{2\pi i} \left( \int_{3/2-iT}^{3/2+iT} + \int_{3/2+iT}^{1/2+iT} + \int_{1/2+iT}^{1/2-iT} + \int_{1/2-iT}^{3/2-iT} \right) f(s) \frac{x^s}{s} \, ds = x \left( C_1(\log x + 2\gamma - 1) + C_2 \right),
\end{equation}
where
\begin{equation*}
C_1 := P(1) = \prod_{p} \left(1 - \frac{1}{2p} + \frac{1}{2p^2}\right), \quad C_2 := P'(1).
\end{equation*}

Now choose $T = x$. The integrals over the horizontal segments satisfy
\begin{equation}
\left| \frac{1}{2\pi i} \left( \int_{1/2-iT}^{3/2-iT} + \int_{3/2+iT}^{1/2+iT} \right) f(s) \frac{x^s}{s} \, ds \right| \ll \int_{1/2}^{3/2} \left| \zeta^2(\sigma + iT) P(\sigma + iT) \right| \frac{x^{\sigma}}{T} \, d\sigma \ll x^{1/2+\varepsilon},
\end{equation}
using the classical bound $\zeta(\sigma + it) \ll |t|^{\varepsilon}$ for $\sigma \geq 1/2$ and $|t| \geq 1$, together with the absolute convergence of $P(s)$ in $\Re(s) > 1/2$.

Similarly, for the vertical integral on $\Re(s) = 1/2$, we have
\begin{equation}
\left| \frac{1}{2\pi i} \int_{1/2-iT}^{1/2+iT} f(s) \frac{x^s}{s} \, ds \right| \ll \int_{-T}^{T} \left| \zeta^2\left(\frac{1}{2} + it\right) P\left(\frac{1}{2} + it\right) \right| \frac{x^{1/2}}{|t| + 1} \, dt \ll x^{1/2+\varepsilon}.
\end{equation}

Finally, combining estimates (2), (3), (4), and (5), we conclude that
\begin{equation}
\sum_{n \leq x} D(n^2) = x \left( C_1(\log x + 2\gamma - 1) + C_2 \right) + O(x^{1/2+\varepsilon}).
\end{equation}

This completes the proof.
\end{proof}

\textbf{Proof of Theorem 2.1.} Let $x \geq 2$ be a real number. There exists a unique integer $N \geq 1$ such that
\begin{equation*}
N^2 \leq x < (N+1)^2.
\end{equation*}

Using this decomposition, we split the sum as
\begin{equation}
\sum_{n \leq x} D(n + s(n)) = \sum_{n < N^2} D(n + s(n)) + \sum_{N^2 \leq n \leq x} D(n + s(n)),
\end{equation}
where the function $s(n)$ is defined implicitly by the relation $n + s(n) = m^2$ for the smallest integer $m$ with $m^2 \geq n$. In particular, for $k^2 \leq n < (k+1)^2$, one has $n + s(n) = (k+1)^2$.

We first estimate the tail sum. Since $D(m) \ll_{\varepsilon} m^{\varepsilon}$ for any $\varepsilon > 0$, we obtain
\begin{equation}
\sum_{N^2 \leq n \leq x} D(n + s(n)) \ll_{\varepsilon} \sum_{N^2 \leq n \leq x} (n + s(n))^{\varepsilon} \ll_{\varepsilon} \sum_{N^2 \leq n < (N+1)^2} x^{\varepsilon} \ll x^{\varepsilon} \cdot \left((N+1)^2 - N^2\right) \ll x^{1/2+\varepsilon}.
\end{equation}

We now treat the main part of the sum. Using the definition of $s(n)$, for each integer $k \geq 1$ and $n$ such that $k^2 \leq n < (k+1)^2$, we have $n + s(n) = (k+1)^2$. Hence,
\begin{equation*}
\sum_{n < N^2} D(n + s(n)) = \sum_{k=1}^{N-1} \sum_{k^2 \leq n < (k+1)^2} D((k+1)^2) = \sum_{k=1}^{N-1} (2k+1) D((k+1)^2).
\end{equation*}

Shifting the index ($m = k+1$) yields
\begin{equation*}
\sum_{n < N^2} D(n + s(n)) = \sum_{m=2}^{N} (2m-1) D(m^2) = 2 \sum_{m=1}^{N} m D(m^2) - \sum_{m=1}^{N} D(m^2) + O(1).
\end{equation*}

By the bound $\sum_{m \leq N} D(m^2) \ll N^{1+\varepsilon}$ (which follows from the known asymptotic of this sum), the second term is $O(N^{1+\varepsilon}) = O(x^{1/2+\varepsilon})$. Thus,
\begin{equation}
\sum_{n < N^2} D(n + s(n)) = 2 \sum_{m=1}^{N} m D(m^2) + O(x^{1/2+\varepsilon}).
\end{equation}

To evaluate $\sum_{m \leq N} m D(m^2)$, define
\begin{equation*}
S(t) := \sum_{m \leq t} D(m^2).
\end{equation*}

By partial summation (see, e.g., \cite[p. 9]{Bordelles2020}), we have
\begin{equation*}
\sum_{m \leq N} m D(m^2) = N S(N) - \int_{1}^{N} S(t) \, dt.
\end{equation*}

From the previous section, we know that
\begin{equation*}
S(t) = C_1 t \log t + C_1(2\gamma - 1)t + C_2 t + O(t^{1/2+\varepsilon}),
\end{equation*}
where
\begin{equation*}
C_1 = \prod_{p} \left(1 - \frac{1}{2p} + \frac{1}{2p^2}\right), \quad C_2 = P'(1),
\end{equation*}
and $P(s) = \prod_{p} \left(1 - \frac{1}{2p^s} + \frac{1}{2p^{2s}}\right)$.

Substituting this expansion, we obtain
\begin{align}
\sum_{m \leq N} m D(m^2) &= N \left[ C_1 N \log N + C_1(2\gamma - 1)N + C_2 N + O(N^{1/2+\varepsilon}) \right] \nonumber \\
&\quad - \int_{1}^{N} \left[ C_1 t \log t + C_1(2\gamma - 1)t + C_2 t + O(t^{1/2+\varepsilon}) \right] dt \nonumber \\
&= C_1 N^2 \log N + C_1(2\gamma - 1)N^2 + C_2 N^2 \nonumber \\
&\quad - \left[ \frac{C_1}{2} N^2 \log N - \frac{C_1}{4} N^2 + \frac{C_1(2\gamma - 1)}{2} N^2 + \frac{C_2}{2} N^2 \right] + O(N^{3/2+\varepsilon}) \nonumber \\
&= \frac{C_1}{2} N^2 \log N + \left( \frac{2\gamma - 1}{2} C_1 + \frac{C_2}{2} \right) N^2 + O(N^{3/2+\varepsilon}).
\end{align}

Inserting (9) into (8) gives
\begin{equation}
\sum_{n < N^2} D(n + s(n)) = C_1 N^2 \log N + \left( \left(2\gamma - \frac{1}{2}\right)C_1 + C_2 \right) N^2 + O(N^{3/2+\varepsilon}).
\end{equation}

Finally, recall that $N = \lfloor \sqrt{x} \rfloor$, so $N = \sqrt{x} + O(1)$ and $N^2 = x + O(\sqrt{x})$. Using the expansion $\log N = \frac{1}{2} \log x + O(x^{-1/2})$, we obtain
\begin{equation*}
N^2 \log N = \frac{1}{2} x \log x + O(x^{1/2} \log x).
\end{equation*}

Combining this with (6), (7), and (10), we conclude that
\begin{equation*}
\sum_{n \leq x} D(n + s(n)) = \frac{C_1}{2} x \log x + \left( \left(2\gamma - \frac{1}{2}\right)C_1 + C_2 \right) x + O(x^{3/4+\varepsilon}).
\end{equation*}

This completes the proof of the theorem.

\end{document}